\title{A note on a Cayley graph of $\mathfrak{S}_n$.}
\author{
   Guillaume Chapuy, 
   \\ \small\it CNRS - LIAFA, Universit\'e Paris 7, 
   \\ \small\it Paris, France.
   \and
   Valentin F\'eray, \\ \small\it CNRS - LaBRI, Universit\'e Bordeaux 1,
   \\ \small\it Talence, France.
   }
\newtheorem{theorem}{\bf Theorem}[section]
\newtheorem{corollary}[theorem]{\bf Corollary}
\newtheorem{proposition}[theorem]{\bf Proposition}
\newenvironment{proof}%
{\noindent{\bf Proof.}\ }%
{\hfill$\Box$\par\bigskip}%
\begin{document}

\date{February 14, 2012}
\maketitle

\begin{abstract}
  Recently in graph theory several authors have studied the spectrum of the
  Cayley graph of the symmetric group $\mathfrak{S}_n$ generated by the
  transpositions $(1,i)$ for $2\leq i\leq n$. Several conjectures were made and
  partial results were obtained.

  The purpose of this note is to point out that,
  as mentioned also by P. Renteln,
  this problem is actually
  already solved in another context.
  Indeed it is
  equivalent to studying the spectrum of so-called Jucys-Murphy elements in the
  algebra of the symmetric group,
  which is well understood.
  The aforementioned conjectures are 
  direct consequences of the existing theory.
  We also present a related result from P. Biane, giving an asymptotic description
  of this spectrum.
  
  We insist on the fact that this note does not contain any new results,
  but has only been written to convey the information from the algebraic combinatorics
  community to graph theorists.
\end{abstract}

\maketitle

\section{Jucys-Murphy elements.}

We let $n\geq 1$ be an integer and $\mathfrak{S}_n$ be the symmetric group on
$\{1,2,\dots,n\}$. We let $\mathcal{P}(n)$ be the set of partitions of $n$ and
we note $\lambda \vdash n$ if $\lambda \in \mathcal{P}(n)$.
The irreducible representations of $\mathfrak{S}_n$ are canonically indexed by elements of
$\mathcal{P}(n)$, and we denote by $V_\lambda$ the irreducible module
associated with the partition $\lambda$.
The regular representation of $\mathfrak{S}_n$ is decomposed into irreducible
submodules as follows \cite[Proposition 1.10.1]{Sagan}:
\begin{eqnarray}\label{eq:repreg}
\mathbb{C}[\mathfrak{S}_n] = \bigoplus_{\lambda \in \mathcal{P}(n)}
V_{\lambda}^{f_\lambda},
\end{eqnarray}
where $f_\lambda := \mathrm{dim} V_\lambda$.\\

A partition $\lambda \vdash n$ is represented by its Ferrers diagram as on
Figure~\ref{fig:ferrers}(a).
A \emph{standard Young tableau (SYT)} of shape $\lambda$ is a filling of the
Ferrers diagram of
$\lambda$ with the elements $\{1,2,\dots,n\}$ in such a way that elements
increase along rows and columns (in particular all elements are distinct, and
each element appears exactly once). See Figure~\ref{fig:ferrers}(b).
We let $\mathcal{T}(\lambda)$ we the set of SYT
of shape $\lambda$.
A well known result asserts~\cite[Theorem 2.6.5]{Sagan} that
$f_\lambda=\mathrm{card}\ \mathcal{T}(\lambda)$.

\begin{figure}\label{fig:ferrers}
  \begin{center}
\includegraphics[scale=1]{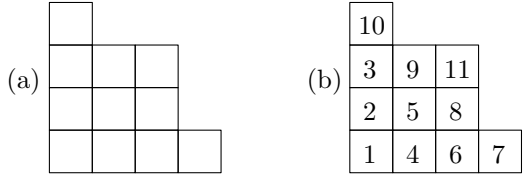}
  \caption{(a) The Ferrers diagram of the partition $\lambda=(4,3,3,1)$ (we use
  the French notation); (b) A standard Young Tableau.}
\end{center}
\end{figure}

Let $\lambda$ be a partition, and let $\square$ be a box of the Ferrers diagram of
$T$. Let $x(\square)$ and $y(\square)$ be its abscissa and ordinate, respectively,
and define the \emph{content} of the box as $c(\square)=y(\square)-x(\square)$.
If $T\in\mathcal{T}(\lambda)$ is a SYT, and if $i$ is an element of $\{1,2,\dots,
n\}$, we define $c_T(i)$ as the content of the box in which the label $i$
appears in the tableau $T$. For example on
Figure~\ref{fig:ferrers}(b) one has $c_T(2)=1$, $c_T(5)=0$ and $c_T(6)=-2$.

The \emph{Jucys-Murphy elements} $J_2,\dots,J_n$ are elements of the group
algebra $\mathbb{C}[\mathfrak{S}_n]$ introduced separately by A. Jucys \cite{Jucys1966}
and G. Murphy \cite{Murphy1981}. In recent years they have proved to be very
efficient tools in the study of the representations of the symmetric
group~\cite{OkVe1996} (see also \cite{Rep}).
They are defined by:
$$J_i = (1, i) + (2,i) + \dots + (i-1,i).$$
As shown by the following theorem~\cite[equation (12)]{Jucys1974}, the action of the $J_i's$
on the irreducible modules $V_\lambda$ is diagonal, and the eigenvalues
have a combinatorial description in terms of contents
(see also \cite[Theorem 3.7]{RandomShufflesRepresentations},
where the Cayley graph of $S_n$ is explicitly mentioned):
\begin{theorem}[\cite{Jucys1974}]\label{thm:JM}
  Let $\lambda \in \mathcal{P}(n)$.
  Then there exists a basis $(v_T)_{T\in\mathcal{T}(\lambda)}$ of the irreducible 
  module $V_\lambda$, indexed by SYT's of shape $\lambda$, such that for all 
  $i \in \{2,\dots,n\}$, one has:
  $$ J_i v_T = c_T(i) v_T .$$
\end{theorem}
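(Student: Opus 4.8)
The plan is to establish the theorem by induction on $n$, building the basis $(v_T)$ explicitly using the branching rule for the symmetric group. The key structural fact I would exploit is that the Jucys-Murphy elements are compatible with the chain of subgroups $\mathfrak{S}_1 \subset \mathfrak{S}_2 \subset \dots \subset \mathfrak{S}_n$: indeed $J_i$ lies in $\mathbb{C}[\mathfrak{S}_i]$ and commutes with everything in $\mathbb{C}[\mathfrak{S}_{i-1}]$, so the whole family $J_2,\dots,J_n$ generates a commutative subalgebra (the Gelfand-Tsetlin algebra). This commutativity is what allows simultaneous diagonalization, and reducing to it is morally the main point.

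First I would set up the branching rule: restricting $V_\lambda$ from $\mathfrak{S}_n$ to $\mathfrak{S}_{n-1}$ decomposes multiplicity-free as $\bigoplus_{\mu} V_\mu$, where $\mu$ ranges over partitions of $n-1$ obtained by removing one corner box from $\lambda$. Iterating this restriction down to $\mathfrak{S}_1$ yields a canonical decomposition of $V_\lambda$ into lines, and the chains of partitions $\lambda = \lambda^{(n)} \supset \lambda^{(n-1)} \supset \dots \supset \lambda^{(1)}$ indexing these lines are in bijection with standard Young tableaux $T \in \mathcal{T}(\lambda)$: the box added at step $i$ is the one labelled $i$ in $T$. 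For each such chain I pick a nonzero vector $v_T$ spanning the corresponding line; since the decomposition is multiplicity-free at every stage, $v_T$ is determined up to scalar, which is all we need.

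Next I would verify the eigenvalue statement. By construction $v_T$ lies in the copy of $V_{\lambda^{(i)}}$ inside the restriction to $\mathfrak{S}_i$, and more precisely in a one-dimensional weight space for the commutative family $J_2,\dots,J_i$. So $J_i v_T = \alpha\, v_T$ for some scalar $\alpha$, and it remains to identify $\alpha$ with the content $c_T(i)$, the content of the box $\lambda^{(i)}/\lambda^{(i-1)}$. The cleanest way is to compute the scalar by which the central element $J_2 + \dots + J_i = \sum_{a<b\le i}(a,b)$ acts on $V_{\lambda^{(i)}}$: this is a sum of all transpositions in $\mathfrak{S}_i$, a central element, hence acts by a known scalar expressible via the sum of contents of $\lambda^{(i)}$ (equivalently through the character value on a transposition, by Schur's lemma). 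Taking the difference of this scalar for $\lambda^{(i)}$ and for $\lambda^{(i-1)}$ isolates the action of $J_i$ alone and yields exactly the content of the added box.

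The main obstacle is justifying that each iterated weight space is genuinely one-dimensional so that $v_T$ is well-defined and an honest eigenvector of $J_i$, rather than merely lying in a higher-dimensional invariant subspace; this rests on the multiplicity-freeness of the branching rule, which I would either cite or derive from $\dim V_\lambda = \mathrm{card}\,\mathcal{T}(\lambda)$ together with the fact that summing the branching over all chains recovers this dimension. A secondary technical point is confirming that $J_i$ stabilizes each $\mathfrak{S}_{i-1}$-isotypic piece (so that it acts scalarly on the line rather than mixing lines); this follows from $J_i$ commuting with $\mathbb{C}[\mathfrak{S}_{i-1}]$ together with Schur's lemma. Once these two facts are in hand, the content computation via the central sum-of-transpositions element is a routine and short calculation.
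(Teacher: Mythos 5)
Your proposal is correct, but be aware that the paper does not actually prove this theorem: it is quoted as an external result of Jucys \cite{Jucys1974}, with pointers to the representation-theoretic literature (notably \cite{OkVe1996}) for the modern treatment. What you have written is essentially that modern Okounkov--Vershik/Gelfand--Tsetlin argument, and it is sound: multiplicity-free branching along $\mathfrak{S}_1\subset\mathfrak{S}_2\subset\dots\subset\mathfrak{S}_n$ canonically splits $V_\lambda$ into lines indexed by chains of partitions, i.e.\ by SYT; since $J_i\in\mathbb{C}[\mathfrak{S}_i]$ commutes with $\mathbb{C}[\mathfrak{S}_{i-1}]$, Schur's lemma forces $J_i$ to act by a scalar on each irreducible $\mathfrak{S}_{i-1}$-constituent of each $\mathfrak{S}_i$-constituent (your ``secondary technical point'' is exactly the right repair of the slightly imprecise claim that $v_T$ spans a one-dimensional weight space already at stage $i$ --- it does not, but scalar action on the whole constituent is all you need); and writing $J_i=Z_i-Z_{i-1}$ with $Z_m=\sum_{a<b\le m}(a,b)$ central identifies the eigenvalue as the difference of content sums, namely the content of the box $\lambda^{(i)}/\lambda^{(i-1)}$, which is $c_T(i)$. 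Jucys' original derivation is more computational and does not package the statement this way; your route buys conceptual clarity and is the standard one in the literature the note cites.

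Two small points of rigor. First, your fallback plan to \emph{derive} multiplicity-freeness from the dimension identity $f_\lambda=\sum_{\mu}f_\mu$ (sum over $\mu$ obtained by removing a corner) does not work as stated: the equation $f_\lambda=\sum_{\mu\vdash n-1}m_\mu f_\mu$ with unknown multiplicities does not by itself pin down the $m_\mu$, so you should simply cite the branching rule (e.g.\ \cite[Theorem 2.8.3]{Sagan}); this also avoids any circularity with the Okounkov--Vershik development, in which branching is a consequence rather than an input. Second, the fact that the central element $Z_m$ acts on $V_\mu$ by $\sum_{\square\in\mu}c(\square)$ is the classical Frobenius identity for the character value on a transposition; it is standard but does require a citation or a short independent proof, since it is the only place where contents actually enter.
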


\section{A Cayley Graph}

As in \cite{FriedmanCayleyGraphs,CayleyJMConjectures,MoharSpectrum},
we consider the Cayley graph $G_n$ on $\mathfrak{S}_n$ generated by
the transpositions $(1,i)$ for $2\leq i\leq n$. The \emph{spectrum} of this
graph is defined as the spectrum of its adjacency matrix. It (evidently) coincides with the
spectrum of the left multiplication by the Jucys-Murphy element $J_n$ on the group
algebra $\mathbb{C}[\mathfrak{S}_n]$
(this link and the fact that it implies that the aforementioned spectrum
is integral
are written explicitly in the introduction of paper
\cite{renteln2011distance}).
From \eqref{eq:repreg} and Theorem~\ref{thm:JM} we immediately obtain:
\begin{corollary}\label{cor}
  The spectrum of $G_n$ contains only integers. The
  multiplicity $\mathrm{mul}(k)$ of an integer $k\in\mathbb{Z}$ is given by:
  $$
  \mathrm{mul(k)} = \sum_{\lambda\in \mathcal{P}(n)}
		     f_\lambda I_\lambda(k)
  $$
  where $I_\lambda(k) = \mathrm{card} \{T\in\mathcal{T}(\lambda), c_T(n)=k\}$
  is the number of standard Young tableaux of shape $\lambda$ in which the
  integer $n$ appears in a box of content $k$, and $f_\lambda$ is the number of
  SYT of shape $\lambda$.
\end{corollary}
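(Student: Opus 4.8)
The plan is to identify the adjacency operator of $G_n$ with a multiplication operator on the group algebra and then diagonalize it using Theorem~\ref{thm:JM}. First I would make the identification stated in the text precise. Since each generator $(1,i)$ is an involution, the connection set is symmetric and $G_n$ is undirected; identifying a vertex $g$ with the basis vector $g\in\mathbb{C}[\mathfrak{S}_n]$, the adjacency operator sends $g$ to $\sum_{i=2}^n (1,i)\,g$, i.e. it is left multiplication by $A:=\sum_{i=2}^n(1,i)$, the sum of all transpositions moving the symbol $1$. The element $J_n=\sum_{i=1}^{n-1}(i,n)$ is instead the sum of all transpositions moving the symbol $n$, and conjugating by $(1,n)$ sends $(1,i)\mapsto(n,i)$ for $2\le i\le n-1$ while fixing $(1,n)$, so that $(1,n)\,A\,(1,n)=J_n$. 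Hence $A$ and $J_n$ are conjugate, the operators of left multiplication by $A$ and by $J_n$ are similar, and they have the same spectrum with multiplicities; this justifies working with $J_n$.

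Next I would use the decomposition~\eqref{eq:repreg} of the left regular representation. Left multiplication by $J_n$ preserves each irreducible summand of~\eqref{eq:repreg} and restricts, on every copy of $V_\lambda$, to the action of $J_n$ on the module $V_\lambda$. Consequently the spectrum of left multiplication by $J_n$ on $\mathbb{C}[\mathfrak{S}_n]$, counted with multiplicity, is the disjoint union over $\lambda\vdash n$ of $f_\lambda$ copies of the spectrum of $J_n$ acting on $V_\lambda$.

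It then remains to apply Theorem~\ref{thm:JM} with $i=n$: in the basis $(v_T)_{T\in\mathcal{T}(\lambda)}$ the action of $J_n$ on $V_\lambda$ is diagonal with eigenvalues $c_T(n)$. As contents are integers by definition, every eigenvalue is an integer, which yields the first assertion. Moreover the eigenvalue $k$ appears on $V_\lambda$ with multiplicity equal to the number of tableaux $T$ with $c_T(n)=k$, namely $I_\lambda(k)$; accounting for the $f_\lambda$ copies of each $V_\lambda$ and summing over $\lambda$ gives $\mathrm{mul}(k)=\sum_{\lambda\vdash n} f_\lambda I_\lambda(k)$.

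Since the statement is labelled an immediate corollary, I do not expect any genuine obstacle. The only points demanding care are the bookkeeping of multiplicities---each $V_\lambda$ occurs $f_\lambda=\dim V_\lambda$ times in the regular representation---and the passage from the literal generating set $\{(1,i)\}$ to $J_n$ through the conjugacy $A\sim J_n$, which is what makes the ``evidently coincides'' remark legitimate.
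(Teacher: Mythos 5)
Your proof is correct and follows exactly the route the paper takes (which it leaves almost entirely implicit): identify the adjacency operator with left multiplication by $\sum_{i=2}^n(1,i)$, pass to $J_n$, decompose the regular representation via \eqref{eq:repreg}, and apply Theorem~\ref{thm:JM} with $i=n$. Your only addition is to make precise the paper's ``evidently coincides'' remark through the conjugation $(1,n)\,A\,(1,n)=J_n$, which is a correct and worthwhile clarification rather than a different approach.
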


\noindent We list some direct remarks and consequences: \vspace{-3mm}
 \begin{enumerate}\setlength{\itemsep}{0pt}\setlength{\parskip}{0pt}
   \item All eigenvalues are integers, as conjectured 
       in~\cite[Conjecture 2.14]{CayleyJMConjectures}.
   \item The spectrum of $G_n$ is contained in
     $\{-(n-1),\dots,n-1\}.$ Moreover one has
     $\mathrm{mul}(-n+1)=\mathrm{mul}(n-1)=1$.
   \item Unless $n= 2$ or $n=3$, one has $\mathrm{mul}(0)\neq 0$ (as proved in \cite{MoharSpectrum}).
       Indeed unless $n=2$ or $n=3$, there
     exists a SYT of size $n$ in which $n$ appears on the main diagonal.
     More generally, the spectrum is given by:
     \begin{eqnarray*}
     \{k,\mathrm{mul}(k)\neq 0\}&=& \{-(n-1),\dots,n-1\}\setminus\{0\} \mbox{ if }
     n\in\{2,3\},\\
     &&\{-(n-1),\dots,n-1\} \mbox{ if }   n>3.
   \end{eqnarray*}
   \item Let $1\leq l \leq n$, and consider the ``hook-shaped'' partition
     $\lambda^{(l)}=(n-l,1,1,\dots,1)$ of $n$. The dimension of this partition
     is $f_{\lambda^{(l)}}={n-1 \choose l}$, as can be seen directly or via the
     hook-length formula (\cite{HookLengthFormula1954} or~\cite[Theorem
     3.10.2]{Sagan}).
     Moreover, the number of standard Young tableaux $T$ of shape
     $\lambda^{(l)}$ such that
     $c_T(n)=l$ (i.e., such that $n$ appears in the topmost box) equals
     ${n-2 \choose l-1}$, since such a tableau is determined by the choice of
     the $l-1$ elements appearing between $1$ and $n$ on the left row of the
     tableau. Hence by Corollary~\ref{cor} one has:
     $$
     \mathrm{mul}(l) \geq {n-2 \choose l-1} {n-1 \choose l}.
     $$
     This improves the bound $\mathrm{mul}(l) \geq {n-2 \choose l-1}$ proved
     in~\cite{MoharSpectrum}. Similar arguments (transpose tableaux) show that the same
     bound holds for $\mathrm{mul}(-l)$.
    \item The bound given in point 4 could possibly be refined by taking more complicated
      tableaux than hook-shapes into account.
      Rather than pursuing in this direction,  we will show in the next section
      that almost all the eigenvalues are of order $O(\sqrt{n})$, and we will
      give a very precise description of the spectrum of $G_n$ in this range of
      values.
 \end{enumerate}

 \section{Semi-circle law}

 To be comprehensive on what is known about the spectrum of $J_n$,
 we present here an asymptotic result of P. Biane
 \cite[Theorem 1]{BianeJMSemiCircleLaw}:
 the spectral measure converges in
 distribution to the semicircle law.
 This provides a good
 description of the spectrum in the range $k=\Theta(\sqrt{n})$.

 The proof technique
 is standard and goes back to the beginning of random matrix theory.
 As it is quite short and elegant, we repeat it here.

 For
 $x\in\mathbb{R}$ we denote
 by $\delta_x$ the Dirac measure at $x$.
 \begin{proposition}
   The measure 
   $$\mathrm{sp}_n:=\frac{1}{n!} \sum_{k\in\mathbb{Z}} \mathrm{mul}(k)
   \delta_{\frac{k}{2\sqrt{n}}}$$
   converges in distribution to the semi-circle law, \emph{i.e.} for
   all $a<b$ we 
   have the convergence:
   $$
   \frac{1}{n!}\sum_{k\in[2a\sqrt{n},2b\sqrt{n}]\cap \mathbb{Z}} \mathrm{mul}(k) 
   \longrightarrow \frac{2}{\pi}\int_a^b \sqrt{1-\alpha^2} d\alpha
   $$
   when $n$ tends to infinity.
 \end{proposition}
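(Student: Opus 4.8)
The plan is to use the classical method of moments from random matrix theory. Since the semicircle law is compactly supported, it is determined by its moments, so it suffices to show that for each fixed integer $m\geq 0$ the $m$-th moment of $\mathrm{sp}_n$ converges to the $m$-th moment of the semicircle law. Writing the moment explicitly, I would first record
$$\int_{\mathbb{R}} x^m\, d\,\mathrm{sp}_n(x) = \frac{1}{n!\,(2\sqrt{n})^m} \sum_{k\in\mathbb{Z}} \mathrm{mul}(k)\, k^m,$$
and the whole argument reduces to estimating the inner sum.

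The next step is to recognize this sum as a trace. By Corollary~\ref{cor}, together with \eqref{eq:repreg} and Theorem~\ref{thm:JM}, the numbers $\mathrm{mul}(k)$ are exactly the eigenvalue multiplicities of the operator of left multiplication by $J_n$ on the regular representation $\mathbb{C}[\mathfrak{S}_n]$; hence $\sum_k \mathrm{mul}(k)\,k^m = \mathrm{Tr}$ of left multiplication by $J_n^m$. I would then use the elementary fact that for any $a\in\mathbb{C}[\mathfrak{S}_n]$ the trace of left multiplication by $a$ equals $n!$ times the coefficient of the identity in $a$ (every group element other than $\mathrm{id}$ acts as a permutation matrix with vanishing trace). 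This yields the clean identity
$$\int_{\mathbb{R}} x^m\, d\,\mathrm{sp}_n(x) = \frac{1}{(2\sqrt{n})^m}\,[\mathrm{id}]\,J_n^m,$$
where $[\mathrm{id}]\,J_n^m$ is the coefficient of the identity permutation in $J_n^m$.

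It remains to evaluate $[\mathrm{id}]\,J_n^m$ asymptotically. Expanding $J_n^m=\sum_{(i_1,\dots,i_m)}(i_1,n)(i_2,n)\cdots(i_m,n)$ over sequences with entries in $\{1,\dots,n-1\}$, the coefficient $[\mathrm{id}]\,J_n^m$ counts those sequences whose product of star-transpositions is the identity. For odd $m$ this count is zero, since a product of an odd number of transpositions is an odd permutation; so the odd moments of $\mathrm{sp}_n$ vanish exactly. For $m=2p$ I would analyze the admissible sequences by following the trajectory of the symbol $n$ through the successive swaps, viewing the word as a walk that must return to its starting configuration. The dominant contribution, of order $n^p$, comes from the \emph{non-crossing} pairings of the $2p$ steps, each leaving about $n$ free choices for each of the $p$ pairs, whereas crossing pairings and degenerate configurations contribute only $O(n^{p-1})$. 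Since non-crossing pairings of $\{1,\dots,2p\}$ are counted by the Catalan number $C_p$, this gives $[\mathrm{id}]\,J_n^{2p}\sim C_p\, n^p$.

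Putting the pieces together, the even moments satisfy $\int x^{2p}\,d\,\mathrm{sp}_n \to C_p/4^p$, which is precisely the $2p$-th moment of the density $\tfrac{2}{\pi}\sqrt{1-\alpha^2}$ on $[-1,1]$ (checked by the substitution $\alpha=\sin\theta$ and a beta-integral), while all odd moments tend to $0$; convergence of all moments to those of a compactly supported law then gives convergence in distribution, and hence the stated convergence of $\tfrac{1}{n!}\sum_{k\in[2a\sqrt n,2b\sqrt n]}\mathrm{mul}(k)$. The main obstacle is the combinatorial estimate of the third paragraph: identifying the walk structure of products of star-transpositions, isolating the non-crossing (Catalan) configurations as the leading term, and controlling the lower-order error. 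Everything else in the argument is formal.
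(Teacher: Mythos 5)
Your proposal is correct and follows essentially the same route as the paper's own proof: reduction to convergence of moments, the trace identity $\mathrm{Tr}\,\sigma = n!\,[\sigma=\mathrm{id}]$ turning moments into the coefficient of the identity in $J_n^m$, vanishing of odd moments by parity, and the Catalan count $[\mathrm{id}]\,J_n^{2p}\sim \mathrm{Cat}(p)\,n^p$ via non-crossing pairings with an $O(n^{p-1})$ error. The combinatorial core you flag as ``the main obstacle'' is treated at exactly the same level of detail in the paper (which asserts the non-crossing structure and the $(n-1)(n-2)\cdots(n-p)$ reconstruction count as ``easily seen'' in a proof explicitly labelled a sketch), so your outline matches the published argument step for step.
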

 \begin{proof}[sketch]
     It is enough to prove (see, {\em e.g.}, \cite[Theorem 30.2]{BillingsleyProbMeasure})
     the convergence of moments, i.e. to
   prove that for any fixed $k\in\mathbb{Z}$ one has when $n$ tends to infinity:
   \begin{eqnarray}\label{moments}
     \frac{1}{n!}	\sum_{l\in\mathbb{Z}} \mathrm{mul}(l)
     \left(\frac{l}{2\sqrt{n}}\right)^k =   \frac{n^{-k/2}}{2^k n!} 
   \mathrm{\bf Tr } J_n^k \longrightarrow
   \frac{2}{\pi}\int_\mathbb{R} \alpha^k\sqrt{1-\alpha^2} d\alpha
   =\left\{\begin{array}{cc}0, & k \mbox{ odd} \\ \frac{1}{2^{k}}\mathrm{Cat}(p) & 
      k=2p.\end{array}\right.
    \end{eqnarray}
   where $\mathrm{Cat}(p):=\frac{(2p)!}{(p+1)!p!}$ is the $p$-th Catalan number.
   Let $\sigma\in\mathbb{S}_n$. The multiplication by $\sigma$ acts by
   permutation on the canonical basis of the group algebra
   $\mathbb{C}[\mathfrak{S}_n]$. Therefore $\mathrm{\bf Tr} \sigma$ equals the
   number of fixed points under this action, i.e.:
   $$
   \mathrm{\bf Tr } \sigma = \mathrm{card }\{\mu\in\mathfrak{S}_n, \sigma \mu = \mu\}
   =\left\{\begin{array}{cc}n!, & \sigma=id \\ 0 & \sigma\neq
     id.\end{array}\right.
   $$
   By developing the product $J_n^k= ((1\ n) + \dots + (n-1\ n) )^k$, this
   implies that $\frac{1}{n!}\mathrm{\bf Tr}J_n^k$ equals the number of
   $k$-uples $(i_1,\dots,i_k)\in \llbracket 1, n-1\rrbracket^k$ such that 
   $(i_1\ n) (i_2\ n)\ \dots\ (i_k \ n) =  id$.
   This number is $0$ if $k$ is odd (signature), so we assume that $k=2p$.
   Since $k$ is fixed and we are interested in large $n$ asymptotics, we only
   need to consider the cases where the set of values $I:=\{i_l, l=1,2,\dots,k\}$ has the
   largest cardinality. It is easily seen that each transposition $(i\ n)$ must appear an even
   number of times in the product, so the maximal case is $\mathrm{card}\ I = p$.
   In this case, we consider the pairing of elements of
   $\{1,2,\dots,k\}$ defined by pairing $l$ and $m$ if and only if
   $i_l=i_m$. It is convenient to represent this pairing by a diagram. For
   example, here is a drawing of this diagram in the case when
   $k=8$ and the product has the form $(i_1\ n) (i_2\ n) (i_2\ n) (i_4\ n)
   (i_4\ n) (i_1\ n) (i_7\ n) (i_7\ n) = id$:

  \begin{center}
\includegraphics[scale=1]{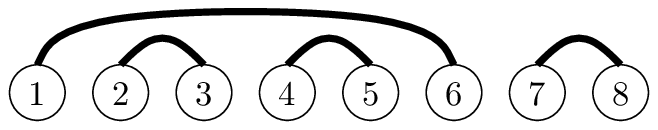}
\end{center}

   In general it is easily seen that this pairing is necessarily non-crossing. Conversely, there are
   exactly $(n-1)(n-2)\dots (n-p)$ ways to
   reconstruct a factorisation $(i_1\ n) (i_2\ n)\
   \dots\ (i_k \ n) =  id$  
   from one of the $Cat(p)$ non-crossing pairings of the set
   $\{1,2,\dots,2p\}$. 
   Therefore one has:
   \begin{eqnarray*}
     \frac{1}{n!}\mathrm{\bf Tr}\ J_n^k &=&
     (n-1)(n-2)\dots (n-p) Cat(p)  + O(n^{p-1})\\
	 &\sim& n^p Cat(p),
   \end{eqnarray*}
 and the convergence of moments \eqref{moments} is proved.
 \end{proof}

 \section*{Acknowledgements}
 We thank P. Renteln for pointing out references \cite{RandomShufflesRepresentations} and \cite{renteln2011distance}.

\small
\bibliographystyle{abbrv}
\bibliography{biblio1202.bib}

\def\cprime{$'$}
\begin{thebibliography}{10}

\bibitem{CayleyJMConjectures}
A.~Abdollahi and E.~Vatandoost.
\newblock Which {C}ayley graphs are integral?
\newblock {\em Elec. J. Comb.}, 16(1):1--17, 2009.

\bibitem{BianeJMSemiCircleLaw}
P.~Biane.
\newblock Permutation model for semi-circular systems and quantum random walks.
\newblock {\em Pacific J. Math.}, 171(2):373--387, 1995.

\bibitem{BillingsleyProbMeasure}
P.~Billingsley.
\newblock {\em Probability and measure}.
\newblock Wiley, 1995.
\newblock 3rd edition.

\bibitem{Rep}
T.~Ceccherini-Silberstein, F.~Scarabotti, and F.~Tolli.
\newblock {\em Representation theory of the symmetric groups: The
  {O}kounkov-{V}ershik approach, character formulas, and partition algebras},
  volume 121 of {\em Cambridge Studies in Advanced Mathematics}.
\newblock Cambridge University Press, Cambridge, 2010.

\bibitem{RandomShufflesRepresentations}
L.~Flatto, A.~Odlyzko, and D.~Wales.
\newblock Random shuffles and group representations.
\newblock {\em Ann. Prob.}, 13(1):154--178, 1985.

\bibitem{HookLengthFormula1954}
J.~S. Frame, G.~d.~B. Robinson, and R.~M. Thrall.
\newblock The hook graphs of the symmetric group.
\newblock {\em Canadian Journal of Mathematics}, 6:316--324, 1954.

\bibitem{FriedmanCayleyGraphs}
J.~Friedman.
\newblock On {C}ayley graphs on the symmetric group generated by tranpositions.
\newblock {\em Combinatorica}, 20(4):505--519, 2000.

\bibitem{Jucys1966}
A.~Jucys.
\newblock On the {Y}oung operators of the symmetric groups.
\newblock {\em Lithuanian Journal of Physics}, {VI}(2):180--189, 1966.

\bibitem{Jucys1974}
A.~Jucys.
\newblock Symmetric polynomials and the center of the symmetric group ring.
\newblock {\em Reports Math. Phys.}, 5:107--112, 1974.

\bibitem{MoharSpectrum}
R.~Krakovski and B.~Mohar.
\newblock Spectrum of {C}ayley graphs on the symmetric group generated by
  transpositions.
\newblock {arXiv:1201.2167}, 2012.

\bibitem{Murphy1981}
G.~Murphy.
\newblock A new construction of {Y}oung's seminormal representation of the
  symmetric group.
\newblock {\em J. Algebra}, 69:287--291, 1981.

\bibitem{OkVe1996}
A.~Okounkov and A.~Vershik.
\newblock A new approach to representation theory of symmetric groups.
\newblock {\em Selecta Math.}, 2(4):1--15, 1996.

\bibitem{renteln2011distance}
P.~Renteln.
\newblock The distance spectra of {C}ayley graphs of {C}oxeter groups.
\newblock {\em Disc. Math.}, 311(8-9):738--755, 2011.

\bibitem{Sagan}
B.~Sagan.
\newblock {\em The symmetric group : representations, combinatorial algorithms,
  and symmetric functions}.
\newblock Springer, New York, second edition, 2001.

\end{thebibliography}

\end{document}